\documentclass[11pt,oneside]{article}
\usepackage[utf8]{inputenc}
\usepackage[english]{babel}
\usepackage{amsmath}
\usepackage{amsfonts}
\usepackage{amssymb}
\usepackage{amsthm}
\usepackage{tikz-cd} 
\usepackage{hyperref}
\usepackage{pgfplots}
\usepackage{subcaption}
\usetikzlibrary{calc}
\usepackage{hyperref}

\usepackage[left=3cm,right=3cm,top=3cm,bottom=3cm]{geometry}
\usepackage[title]{appendix}
\usepackage[]{algorithm2e}

\usepackage[normalem]{ulem}
\useunder{\uline}{\ul}{}
\theoremstyle{plain}
\newtheorem{teo}{}[section]

\newtheorem{cor}[teo]{Corollary}

\newtheorem{lem}[teo]{Lemma}
\newtheorem{thm}[teo]{Theorem}
\theoremstyle{definition}
\newtheorem{ex}[teo]{Example}

\newcommand\blfootnote[1]{%
  \begingroup
  \renewcommand\thefootnote{}\footnote{#1}%
  \addtocounter{footnote}{-1}%
  \endgroup
}

\definecolor{cof}{RGB}{219,144,71}
\definecolor{pur}{RGB}{186,146,162}
\definecolor{greeo}{RGB}{91,173,69}
\definecolor{greet}{RGB}{52,111,72}

\usepackage{graphicx}

\title{From samples to shape: a finite approach to topological invariants}
\author{Pedro J. Chocano}
\date{}

\begin{document}
\maketitle

\begin{abstract}
Given a metric space $Y$ and a compact subspace $X$, we present a method for constructing an inverse sequence of finite topological spaces by sampling points from neighborhoods of $X$ in $Y$. We show that these inverse sequences can be employed to define shape theory and to approximate shape invariants, such as \v{C}ech homology groups. 
\end{abstract}
\blfootnote{2020  Mathematics  Subject  Classification: 55P55, 06A11,  	55N10. 		 }
\blfootnote{Keywords: finite topological spaces, shape theory, homology groups, topological data analysis}
\blfootnote{This research is partially supported by Grant PID2021-126124NB-I00 from Ministerio de Ciencia, Innovación y Universidades (Spain).}
\section{Introduction}\label{sec:introduccion}

The reconstruction of topological spaces, or their properties, using simpler spaces is a longstanding theme in mathematics. A classical example is the nerve theorem, which asserts that if a topological spaces $X$ admits a ``good'' cover, then the associated simplicial complex $N$ (the nerve of the cover) has the same homotopy type as $X$ (see \cite[Corollary 4G.3]{hatcher2000algebraic}). 

In recent years, attention has shifted toward reconstructing and approximating algebraic invariants of compact metric spaces using finite topological spaces (or more broadly Alexandroff spaces). Several results have demonstrated how inverse sequences of finite topological spaces can be constructed to recover topological properties of a space $X$ via their inverse limits (see \cite{moron2008connectedness,clader2009inverse, mondejar2015hyperspaces,bilski2017inverselimit,mondejar2020reconstruction,chocano2020computational,wen2025mappingspace} for a chronological overview and references therein).

These techniques have also facilitated the development of shape theory through inverse sequences of finite topological spaces \cite{chocano2025shape}, as well as other related aspects \cite{chocano2025categoria}. Furthermore, connections between finiteness and shape theory have been previously explored in works such as \cite{sanjurjo1992intrinsic, sanjurjo1997density, moron2001finite} (listed chronologically).

Given a compact metric space $X$, the constructions of inverse sequences of finite topological spaces in \cite{moron2008connectedness, chocano2020computational, clader2009inverse, mondejar2020reconstruction} share a common feature: they rely on a uniform ``sampling'' of points from $X$. This implicitly assumes some prior knowledge of $X$, which may be unrealistic--even for a simple example like the circle. For example, when sampling points on the unit circle $S^1 = \{(x,y) \in \mathbb{R}^2 \mid x^2 + y^2 = 1\}$ in $\mathbb{R}^2$, a computer may only obtain approximations near $S^1$, rather than exact points on it.

The aim of this paper is to extend previous results by addressing this limitation. Specifically, we propose a method that does not require sampling points directly from $X$, but instead allows for the inclusion of nearby points--potentially outside $X$. This flexibility introduces robustness: even when ``mistakes'' are made in sampling (i.e., selecting points not in $X$), the method still reconstructs topological properties of $X$. Here, we interpret ``mistakes'' as sampling points from a neighborhood of $X$ in a larger metric space $Y$, such as $\mathbb{R}^n$. 

Throughout this manuscript, we assume that $Y$ is a metric space (possibly $\mathbb{R}^n$ with the Euclidean distance), and that $X \subset Y$ is a compact subspace. We consider samples taken both near and within $X$ in order to construct an inverse sequence of finite topological spaces, namely

\[
X_1 \xleftarrow{q_{1,2}}
X_2 \xleftarrow{q_{2,3}}
X_3 \xleftarrow{q_{3,4}}
\cdots
\xleftarrow{q_{n-1,n}}
X_n \xleftarrow{q_{n,n+1}}
X_{n+1} \xleftarrow{q_{n+1,n+2}}
\cdots,
\]

where each map $q_{i,i+1} : X_{i+1} \to X_i$ is continuous for every $i \in \mathbb{N}$. We will refer to this inverse sequence simply as $(X_n, q_{n,n+1})$.

This sequence can be used to compute various shape invariants of $X$, including \v{C}ech homology and cohomology groups, shape dimension, and shape groups. In particular, we show that the inverse sequences constructed here can be used to develop shape theory, obtaining a result analogous to that of \cite{chocano2025shape}. Recall that in \cite{chocano2025shape}, for any compact metric space $X$, and by considering points inside it, the authors constructed an inverse sequence of finite topological spaces that is ``unique'' in a certain sense. These inverse sequences are then used to define morphisms between compact metric spaces, yielding a category that is isomorphic to the shape category of compact metric spaces.

While the result of \cite{chocano2025shape} provides an ``intrinsic'' description of the shape category using finite spaces, our approach follows the spirit of K.\ Borsuk's classical framework (see, e.g., \cite[Appendix~2]{mardevsic1982shape} or \cite{borsuk1971shape}), adapted to the setting of finite spaces. In that approach, the idea is to embed any compact metric space into the Hilbert cube $Q$ and then consider systems of neighborhoods of $X$ inside $Q$ that approximate $X$. A schematic illustration of this can be seen in Figure~\ref{fig_esquema_forma_borsuk}. In black we depict the compact metric space $X$ embedded in $Q$. The first elements of the neighborhood system, denoted $U_1, U_2,$ and $U_3$, are highlighted in different colors. This yields an inverse sequence $(U_n, j_{n,n+1})$, where $j_{i,i+1} : U_{i+1} \to U_i$ is simply the inclusion. The sets $A_i$ chosen inside each $U_i$ can be viewed as discretizations of the neighborhoods $U_i$. Our idea is to construct a finite topological space $U_i(A_i)$ and from it an inverse sequence that mirrors the spirit of this neighborhood-based approach to shape theory. Hence, this can be interpreted as a discretization of Borsuk's classical method.

\begin{figure}[h!]
\centering
\includegraphics[scale=1]{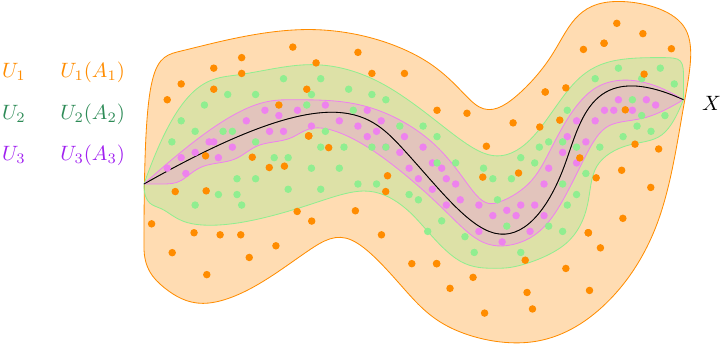}
\caption{Schematic drawing of the classical approach of shape theory and its discretization.}\label{fig_esquema_forma_borsuk}
\end{figure}

It is important to note that alternative approaches to reconstructing algebraic properties exist that do not rely on finite topological spaces. A notable example is the framework presented in \cite{Niyogi2008finding}, where the focus is on Riemannian submanifolds of Euclidean spaces. In that setting, sampling is performed according to a uniform probability distribution, and the analysis leverages geometric tools such as tubular neighborhoods to handle samples taken near the submanifold. These methods are probabilistic in nature and differ fundamentally from the finite-space techniques explored in this paper. Another significant contribution is found in \cite{Cohen2007stability}, where the authors address the problem of estimating the homology groups of a closed subset $X \subset M$, with $M$ a metric space, from a set of possibly inaccurate point samples. Their approach emphasizes stability under perturbations and provides theoretical guarantees for homology inference, offering a complementary perspective to the methods developed here.

The paper is organized as follows. In Section~\ref{Section:preliminaries}, we provide the necessary background and references. Section~\ref{sec_main_result} presents and proves the main results, with the proof of Theorem~\ref{thm:truncated_normal} deferred to Section~\ref{sec_proof} due to its technical nature. In Section~\ref{sec_algorithm_examples}, we outline a method for approximating the Betti numbers of $X$, where $X$ may be an algebraic variety. We conclude with three illustrative examples: two algebraic curves in $\mathbb{R}^2$, and a classical example from shape theory.

\section{Preliminaries}\label{Section:preliminaries}

We recall basic concepts and results from the literature that will be used in subsequent sections. This section is divided into three subsections based on their topics.

\subsection{Finite Topological Spaces}
For a comprehensive introduction to this subject, we strongly recommend consulting the works \cite{barmak2011algebraic,may1966finite}. Let us recall the basic concepts and results from the theory of finite topological spaces. Given a finite topological $T_0$-space $X$ and $x\in X$, consider $U_x$ ($F_x$) as the intersection of every open (closed) set containing $x$. Since $X$ is finite, $U_x$ ($F_x$) is an open (closed) set. Declare $x\leq y$ if and only if $U_x\subseteq U_y$. It is an easy exercise to prove that $(X,\leq)$ is a partially ordered set. On the other hand, for any partially ordered set $(X,\leq)$ (or poset for short) consider the set of lower sets $\tau$ (recall that $L\subseteq X$ is a lower set if $y\leq x$ and $x\in L$ implies that $y\in L$). It is easy to deduce that $\tau$ is a basis for a $T_0$-topology on $X$. These two relations are mutually inverse and this gives that for any finite set $X$ the $T_0$-topologies and the partial orders that can be defined on $X$ are in bijective correspondence. Let $f:X\rightarrow Y$ be a map between finite sets. Then $f$ is a continuous map if and only if $f$ is order-preserving. From this, we can deduce that the category of finite topological $T_0$-spaces and the category of partially ordered sets are isomorphic. From now on, we will not distinguish between a finite partially ordered set and a finite topological $T_0$-space. Moreover, we will assume that every finite topological space is a $T_0$-space and refer to them just as finite topological spaces. 

Given a finite topological space $X$, the \emph{height} of $X$, denoted by $ht(X)$, is the maximum size of a chain in $X$. Similarly, for any $x\in X$, the height $ht(x)$ of a point $x$ is defined as $ht(x)=ht(U_x)$. The Hasse diagram of $X$ is a digraph defined as follows. The vertex set is $X$ and there is an edge from $x$ to $y$ if and only if $x<y$ and there is no $z\in X$ with $x<z<y$. When the last condition holds we simply write $x\prec y$. In subsequent Hasse diagrams we will omit the orientation of the edges and assume an upward orientation.

Let $f,g:X\rightarrow Y$ be two continuous maps between finite topological spaces. We write $f\leq g$ if and only if $f(x)\leq g(x)$ for every $x\in X$.

\begin{thm}\label{thm_homotopia_finitos} Let $f,g:X\rightarrow Y$ be two continuous maps between finite topological spaces. Then $f$ is homotopic to $g$ if and only if there exists a sequence $f_i:X\rightarrow Y$ of continuous maps with $i=1,...,n$ such that $f_1=f$, $f_n=g$ and $f_i\leq f_{i+1}$ or $f_{i+1}\leq f_i$ for every $i=1,...,n-1$.
\end{thm}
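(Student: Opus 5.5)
The approach is to identify homotopies $X\times I\to Y$ with paths in the mapping space $Y^X$, equipped with the compact-open topology. Since $X$ is finite, hence locally compact, the exponential law holds: a homotopy $H:X\times I\rightarrow Y$ from $f$ to $g$ is the same as a continuous path $\gamma:I\rightarrow Y^X$ from $f$ to $g$. The set $Y^X$ of continuous maps is finite, so the statement reduces to two things. First, identifying the compact-open topology on $Y^X$ with the poset topology given by the pointwise order $f\leq g$. Second, describing path components of a finite $T_0$-space as the components of the comparability graph of its order.

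For the first step, I will check that the subbasic open sets of the compact-open topology are of the form $W(K,V)=\{h: h(K)\subseteq V\}$. Here $K$ may be taken to be a single point $x$, since every subset of a finite space is compact and $W(K,V)=\bigcap_{x\in K}W(\{x\},V)$. Moreover $V$ may be taken to be a basic open set $U_y$, because an open set is a union of such sets and $h(x)\in V$ iff $h(x)\in U_{h(x)}\subseteq V$. Then $W(\{x\},U_{h(x)})=\{k : k(x)\leq h(x)\}$. Intersecting over $x$, the minimal open set of $h$ is exactly $\{k: k\leq h\}$. So the compact-open topology on $Y^X$ is the finite $T_0$-topology associated with the pointwise order.

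For the second step, I will show that in a finite space $Z$, two points lie in the same path component iff they are joined by a fence $z_1\leq z_2\geq z_3\leq\cdots$. In one direction, if $a\leq b$ then $\gamma(t)=a$ for $t<1$ and $\gamma(1)=b$ is continuous. To verify this, take the preimage of an open set $V$: if $b\in V$, then $a\in V$ because $V$ is a lower set, so the preimage is $I$; otherwise it is $[0,1)$ or empty. Concatenating such paths (and their reverses) along a fence gives a path. In the other direction, the relation ``joined by a fence'' partitions $Z$ into classes, and each class $C$ is both a lower set and an upper set, hence clopen. A path from $a$ has connected image, so it stays in the class of $a$.

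Combining the two steps with the exponential law gives the theorem: $f\simeq g$ iff $f$ and $g$ lie in the same path component of $Y^X$ iff there is a fence $f=f_1,\dots,f_n=g$ with consecutive maps comparable. The main obstacle is the first step, namely making sure the exponential correspondence is valid and pinning the compact-open topology down exactly to the pointwise order. Local compactness of finite spaces (each $U_x$ is a compact open neighborhood) handles the former, and the computation of minimal open sets above handles the latter. Alternatively, for the direction from a fence to a homotopy, I can bypass the mapping space and write directly $H(x,t)=f(x)$ for $t<1$ and $H(x,1)=g(x)$ when $f\leq g$, checking continuity on $X\times I$ via the fact that the $U_x$ are lower sets. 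This route is the one attributed to Stong and Barmak in \cite{barmak2011algebraic}.
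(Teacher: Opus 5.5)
Your proposal is correct. The paper gives no proof of this statement; it recalls it in the preliminaries as a standard fact from the theory of finite spaces (due to Stong; see \cite{barmak2011algebraic}). Your argument is exactly the standard proof given there: the exponential law for the finite space $X$, which is locally compact in the strong sense because $U_x$ is a compact minimal open neighbourhood; the identification of the compact-open topology on $Y^X$ with the pointwise order; and the description of path components of a finite $T_0$-space by fences.
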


For any finite topological space $X$, the \emph{order complex} of $X$, denoted by $\mathcal{K}(X)$, is the simplicial complex whose simplices are the non-empty chains of $X$. Here, $|\mathcal{K}(X)|$ stands for the geometric realization of $\mathcal{K}(X)$. Consider $f_X:|\mathcal{K}(X)|\rightarrow X$ defined as follows: every point $u\in |\mathcal{K}(X)|$ lies in an unique open simplex $x_1<...<x_n$, then $f(u)=x_1$. Moreover, for any continuous map $f:X\rightarrow Y$ between finite topological spaces. The map $\mathcal{K}(f):\mathcal{K}(X)\rightarrow \mathcal{K}(Y)$ given by $\mathcal{K}(f)(x_1<...<x_n)=f(x_1)\leq ...\leq f(x_n)$ is a simplicial map. Let us recall a key result in the theory of finite topological spaces (\cite[Theorem 2]{mccord1966singular}).
\begin{thm} Let $X$ be a finite topological space. Then $f_X:|\mathcal{K}(X)|\rightarrow X$ is a weak homotopy equivalence. Moreover, $\mathcal{K} $ is a functor and for every continuous map $f:X\rightarrow Y$ between finite topological spaces, $f_X\circ f=\mathcal{K}(f)\circ f_Y$.
\end{thm}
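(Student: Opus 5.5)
The statement to prove is McCord's theorem. It has two parts: $f_X:|\mathcal{K}(X)|\to X$ is a weak homotopy equivalence, and $\mathcal{K}$ is a functor compatible with these maps, i.e.\ $f\circ f_X = f_Y\circ |\mathcal{K}(f)|$. (Read literally, the composition as written is mistyped; this is the intended form.)

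\textbf{Functoriality and naturality.} I would do this part first because it is routine. An order-preserving map sends chains to (possibly degenerate) chains, so $\mathcal{K}(f)$ is simplicial. It preserves identities and composition, so $\mathcal{K}$ is a functor.

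For naturality, take $u$ in the open simplex of the chain $x_1<\dots<x_n$. The image $|\mathcal{K}(f)|(u)$ lies in the interior of the simplex spanned by the distinct values among $f(x_1)\le\dots\le f(x_n)$. The minimum of these values is $f(x_1)$. Hence $f_Y(|\mathcal{K}(f)|(u))=f(x_1)=f(f_X(u))$.

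\textbf{Continuity of $f_X$.} It suffices to check preimages of the basic open sets $U_x$. One shows that $f_X^{-1}(U_x)$ is the union of the open simplices whose minimum vertex is $\le x$. This set equals the open star of $x$ inside $|\mathcal{K}(U_x)|$, up to points lying in chains that pass below $x$. More precisely, it is the complement of the subcomplex of chains containing no point of $U_x$. Chains are downward-closed under taking subchains, so this complement is open in the realization.

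\textbf{Weak equivalence.} I would apply McCord's local criterion. Suppose $p:E\to B$ is continuous and $\mathcal{U}$ is a basis-like open cover of $B$ such that any intersection of two members is a union of members. If each restriction $p^{-1}(U)\to U$ is a weak homotopy equivalence, then $p$ is one.

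Here $\mathcal{U}=\{U_x\}$ works, since $U_x\cap U_y=\bigcup_{z\le x,y}U_z$. Each $U_x$ is contractible because it has the maximum $x$: the constant map and the identity are comparable, so Theorem~\ref{thm_homotopia_finitos} applies. It therefore remains to show that $V=f_X^{-1}(U_x)$ is contractible.

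There is a deformation retraction of $V$ onto $|\mathcal{K}(U_x)|$. A point $u$ of $V$ lies in a chain $c$ whose minimum lies in $U_x$. Write $c=c'\cup c''$, where $c'=c\cap U_x$ is nonempty, and let $u=tu'+(1-t)u''$ with $t>0$. Then the homotopy $(u,s)\mapsto (1-s)u+s u'$ stays inside $V$. Finally, $|\mathcal{K}(U_x)|$ is a cone with apex $x$, hence contractible.

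\textbf{Main obstacle.} The hard step is McCord's covering lemma itself: gluing local weak equivalences into a global one. I would prove it by the standard route. First treat the case of a two-set cover using the excision/Mayer–Vietoris-type argument for homotopy groups (as in tom Dieck or May's "weak equivalence glue" via relative compression of maps of disks). Then handle the finite basis by induction on the number of points.

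Alternatively, and more simply for finite spaces, one can argue by induction on $|X|$. Removing a maximal point $x$ writes $X$ as the union of the open sets $X\setminus\{x\}$ and $U_x$. Their preimages are open and cover $|\mathcal{K}(X)|$. The gluing theorem for weak equivalences over open covers of two sets then finishes the proof.
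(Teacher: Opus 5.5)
Your proof is correct and is essentially McCord's own argument, which the paper does not reproduce but simply cites: continuity because $f_X^{-1}(U_x)$ is the complement of the closed subcomplex $|\mathcal{K}(X\setminus U_x)|$ (a chain has its minimum in the down-set $U_x$ iff it meets $U_x$); McCord's basis-like cover theorem applied to $\{U_x\}$; and the deformation retraction $(u,s)\mapsto(1-s)u+su'$ of $f_X^{-1}(U_x)$ onto the cone $|\mathcal{K}(U_x)|$. Your reading of the naturality identity as $f\circ f_X=f_Y\circ|\mathcal{K}(f)|$ is the right correction; just delete the sentence calling $f_X^{-1}(U_x)$ an open star of $x$, since it is really the union of the open stars of all points of $U_x$, as your ``more precisely'' description already says.
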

Let us recall that a weak homotopy equivalence is a continuous maps inducing isomorphisms in all homotopy groups and, consequently, in all homology groups (by a well-known theorem of J.H.C. Whitehead).
\subsection{Pro-categories and expansions}

We recall basic concepts of pro-categories. Let $\mathcal{C}$ be a category. For any directed set $\Lambda$, we say that $(X_{\lambda},p_{\lambda,\lambda'},\Lambda)$ is an inverse system in $\mathcal{C}$ whenever $X_\lambda$ is an object of $\mathcal{C}$, $p_{\lambda,\lambda'}:X_{\lambda'}\rightarrow X_{\lambda}$ is a morphism in $\mathcal{C}$ and $\lambda<\lambda'<\lambda''$ implies that $p_{\lambda,\lambda''}=p_{\lambda,\lambda'}\circ p_{\lambda',\lambda''}$. When $\Lambda=\mathbb{N}$ with the usual order we just simply say that the inverse system is an inverse sequence and omit $\Lambda$.

We first define the category inv-$\mathcal{C}$. The objects of this category are the inverse systems in $\mathcal{C}$. Given two inverse systems $(X_{\lambda},p_{\lambda,\lambda'},\Lambda)$ and $(Y_{m},q_{m,m'},M)$, a morphism $(f,f_n):(X_{\lambda},p_{\lambda,\lambda'},\Lambda)\rightarrow  (Y_{m},q_{m,m'},M)$ in this category consists of a map $f:M\rightarrow \Lambda$ and of morphisms $f_{m}:X_{f(m)}\rightarrow Y_m$ in $\mathcal{C}$, one for each $m\in M$, such that whenever $m\leq m'$, then there is a $\lambda\in \Lambda$, $f(m),f(m')\leq \lambda$, for which $f_m\circ p_{f(m),\lambda}=q_{m,m'}\circ f_{m'}\circ p_{f(m'),\lambda}$. The composition $(f,f_n)\circ (g,g_s)$, where $(f,f_n):(X_{\lambda},p_{\lambda,\lambda'},\Lambda)\rightarrow  (Y_{m},q_{m,m'},M)$ and $(g,g_s):(Y_{m},q_{m,m'},M)\rightarrow  (Z_{m},r_{m,m'},L)$, is given by $(h,h_s)$ where $h=f\circ g:L\rightarrow \Lambda $ and $h_s:X_{f(g(s))}\rightarrow Z_s$ is defined by $h_s=g_s\circ f_{g(s)}$. 

Given two morphisms $(f,f_n),(g,g_n):(X_{\lambda},p_{\lambda,\lambda'},\Lambda)\rightarrow  (Y_{m},q_{m,m'},M)$, we declare that $(f,f_n)\simeq (g,g_n)$ if and only if for every $m\in M$ there exists $\lambda\in \Lambda$, $f(m),g(m)\leq \lambda$  such that $f_m\circ p_{f(m),\lambda}=g_m\circ p_{g(m),\lambda}$. Define the category pro-$\mathcal{C}$ as the category whose objects are inverse systems in $\mathcal{C}$ and whose morphisms are the equivalence classes of this relation. For further details on inv-$\mathcal{C}$ and pro-$\mathcal{C}$, see \cite{mardevsic1982shape}.

Let $\mathcal{T}$ be a category and $\mathcal{P}$ a subcategory of $\mathcal{T}$. For an object $X$ of $\mathcal{T}$, a $\mathcal{T}$-expansion of $X$ is a morphism in pro-$\mathcal{T}$ of $X$ to an inverse system $\mathbf{X}=(X_\lambda,p_{\lambda,\lambda'},\Lambda)$ in $\mathcal{T}$,  $p:X\rightarrow \mathbf{X}$, with the following universal property: For any inverse system $Y=(Y_{\mu},q_{\mu,\mu'},M)$ in $\mathcal{P}$ and any morphism $h:X\rightarrow \mathbb{Y}$ in pro-$\mathcal{T}$, there exists a unique morphism $f:\mathbf{X}\rightarrow \mathbf{Y}$ in pro-$\mathcal{T}$ such that $h=f\circ p$. We say that $p$ is a $\mathcal{P}$-expansion of $X$ provided $\mathbf{X}$ and $f$ are in pro-$\mathcal{P}$. The subcategory $\mathcal{P}$ is dense in $\mathcal{T}$ provided every object of $\mathcal{T}$ admits a $\mathcal{P}$-expansion. These expansions play a crucial role in the definition of the abstract shape category. For more details see \cite[Chapter 1, Section 2]{mardevsic1982shape}.

\subsection{Inverse limits}

Let us consider the category of groups, denoted by $Grp$, and the topological category, denoted by $Top$. We now particularize the notion of inverse limit within these categories. Suppose that $(X_n,p_{n,n+1})$ is an inverse sequence in $Top$. Consider the product space $\prod_{n\in \mathbb{N}} X_n$ equipped with  the product topology, and let $\pi_j:\prod_{n\in \mathbb{N}} X_n\rightarrow X_j$ be the projection onto the $j$-coordinate. The inverse limit $\mathcal{X}$ of the sequence $(X_n,p_{n,n+1})$ is defined as $$\mathcal{X}=\{x\in \prod_{n\in \mathbb{N}} X_n | \pi_j(x)=p_{j,j'}(\pi_{j'}(x)) \textnormal{ for every }j\leq j'\},$$
where $\mathcal{X}$ inherits the subspace topology from $\prod_{n\in \mathbb{N}} X_n$.

Similarly, the inverse limit $\mathcal{G}$ of an inverse sequence of groups $(G_n,q_{n,n+1})$ is defined by considering the direct product $\prod_{n\in \mathbb{N}}G_n$, the canonical projection $\pi_j$ and an analogous condition to that used for $\mathcal{X}$. The group structure on $\mathcal{G}$ is inherited from its realization as a subgroup of $\prod_{n\in \mathbb{N}}G_n$. 

For a comprehensive introduction to inverse limits, we refer the reader to \cite[Chapter 1, Section 5]{mardevsic1982shape}.
  

\section{Approximation of topological algebraic invariants}\label{sec_main_result}

We begin by establishing the notation. In what follows, $Top$ denotes the category of topological spaces, while $HTop$ and $HPol$ denote the homotopy category and the homotopy category of polyhedra, respectively. Let $Y$ be a metric space and let $X \subset Y$ be a compact metric subspace, endowed with the subspace metric inherited from $Y$. Since every compact metric space can be embedded in the Hilbert cube, our assumption that $X$ is a subspace of a metric space entails no loss of generality.

\begin{figure}[h!]
\centering
\includegraphics[scale=0.7]{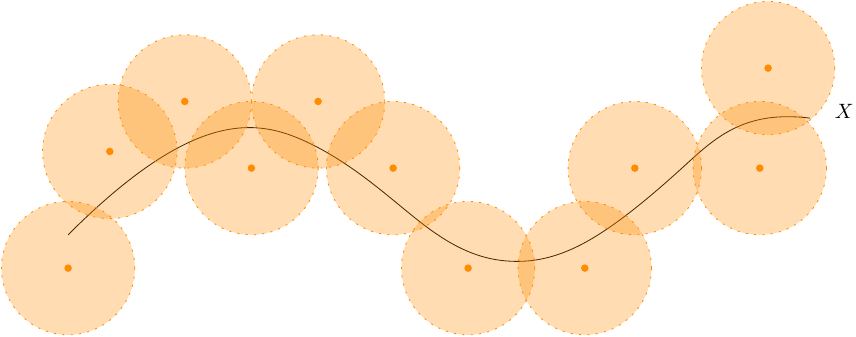}
\caption{Schematic drawing of a neighbourhood $\epsilon$-approximation of $X$.}
\label{fig_esquem_entorno_aproximacion}
\end{figure}

We say that a finite set $A \subset Y$ is a \emph{neighbourhood $\epsilon$-approximation} of $X$ if for every $x \in X$ there exists $a \in A$ such that $d(x,a) < \epsilon$, where $\epsilon$ is a positive real number (see Figure~\ref{fig_esquem_entorno_aproximacion} for a schematic representation of a neighbourhood $\epsilon$-approximation of $X$). This definition allows for the possibility that $A \cap X = \emptyset$; that is, the approximating set may contain no points of $X$. Throughout this paper and in applications, we will assume $Y = \mathbb{R}^n$ equipped with the Euclidean metric.

\begin{ex}\label{ex:lemniscate} Consider the lemniscate of Gerono $C\subset \mathbb{R}^2$, defined as the zero set of the quartic polynomial $x^4-x^2+y^2$. Set $\epsilon_n=\frac{\sqrt{5}}{2^{3n-3}}$ for each $n\in \mathbb{N}$. In Figure \ref{fig:lemnsicate_truncated}, we depict  neighbourhood $\epsilon_1$-approximation $A_1$, neighbourhood $\epsilon_2$-approximation $A_2$ and neighbourhood $\epsilon_3$-approximation $A_3$ in red, green and blue, respectively. 
\begin{figure}[h!]
\centering
\includegraphics[scale=0.27]{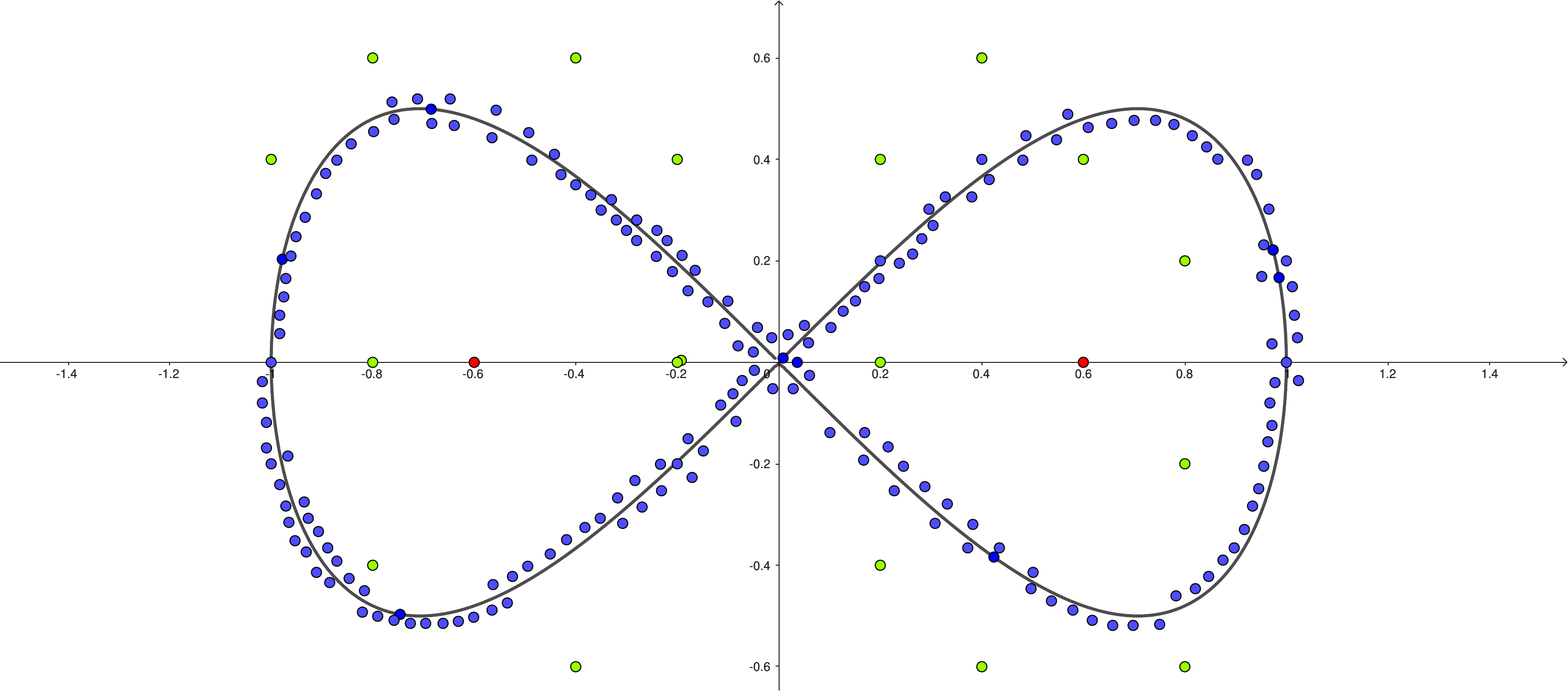}
\caption{Lemniscate of Gerono and neighbourhood $\epsilon$-approximations of it with colors.}\label{fig:lemnsicate_truncated}
\end{figure}
\end{ex}

Let $\epsilon$ be a positive real number and let $A$ be a subset of a metric space. Define $$\mathcal{U}_{\epsilon}(A)=\{C\subseteq A|\textnormal{diam}(C)<\epsilon \}.$$
Let $\{\epsilon_n\}_{n\in \mathbb{N}}$ be a sequence of positive real numbers such that $\epsilon_{n+1}< \frac{\epsilon_n}{2}$ for all $n\in \mathbb{N}$. For each natural number $n$, let $A_n$ be a neighbourhood $\epsilon_n$-approximation of $X$, and consider 
$\mathcal{U}_{4\epsilon_n}(A_n)=\{C\subseteq A_n|\textnormal{diam}(C)<4\epsilon_n \}$, with the partial order  $C\leq D$ if and only if $D\subseteq C$. It is straightforward to verify that $\mathcal{U}_{4\epsilon_n}(A_n)$ is a finite partially ordered set and thus a finite topological $T_0$-space. Define a map $q_{n,n+1}:\mathcal{U}_{4\epsilon_{n+1}}(A_{n+1}) \rightarrow \mathcal{U}_{4\epsilon_n}(A_n)$ by $q_{n,n+1}(C)=\bigcup_{c\in C}(B(c,\epsilon_n)\cap A_n)$. 
\begin{lem} The map $q_{n,n+1}:\mathcal{U}_{4\epsilon_{n+1}}(A_{n+1}) \rightarrow \mathcal{U}_{4\epsilon_n}(A_n)$ is well-defined and continuous.
\end{lem}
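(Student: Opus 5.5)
The plan is to check two things: that $q_{n,n+1}(C)$ really lies in $\mathcal{U}_{4\epsilon_n}(A_n)$ (well-definedness), and that $q_{n,n+1}$ preserves the order (continuity). Continuity will then follow from the correspondence between finite $T_0$-spaces and posets recalled in Section~\ref{Section:preliminaries}, under which continuous maps are exactly the order-preserving maps.

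\emph{Well-definedness.} Let $C\in\mathcal{U}_{4\epsilon_{n+1}}(A_{n+1})$. By construction $q_{n,n+1}(C)=\bigcup_{c\in C}(B(c,\epsilon_n)\cap A_n)$ is a subset of $A_n$, so it only remains to bound its diameter. Take $a,a'\in q_{n,n+1}(C)$, and choose $c,c'\in C$ with $d(a,c)<\epsilon_n$ and $d(a',c')<\epsilon_n$. The triangle inequality gives
\[
d(a,a')\leq d(a,c)+d(c,c')+d(c',a') < 2\epsilon_n+4\epsilon_{n+1}.
\]
Since $\epsilon_{n+1}<\epsilon_n/2$, we have $4\epsilon_{n+1}<2\epsilon_n$, and hence $d(a,a')<4\epsilon_n$. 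The diameter is a supremum, so this only yields $\textnormal{diam}(q_{n,n+1}(C))\leq 2\epsilon_n+4\epsilon_{n+1}$. That is still enough, because $2\epsilon_n+4\epsilon_{n+1}$ is itself strictly less than $4\epsilon_n$ by the strict inequality $\epsilon_{n+1}<\epsilon_n/2$. This slack is precisely why the factor $4$ and the hypothesis on the sequence $\{\epsilon_n\}$ appear.

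\emph{Continuity.} Suppose $C\leq D$ in $\mathcal{U}_{4\epsilon_{n+1}}(A_{n+1})$, which means $D\subseteq C$. Every term of the union defining $q_{n,n+1}(D)$ then also occurs in the union defining $q_{n,n+1}(C)$. Therefore $q_{n,n+1}(D)\subseteq q_{n,n+1}(C)$, that is, $q_{n,n+1}(C)\leq q_{n,n+1}(D)$. So $q_{n,n+1}$ is order-preserving and hence continuous.

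\emph{Main obstacle.} The delicate point is nonemptiness of $q_{n,n+1}(C)$, in case the elements of $\mathcal{U}_{\epsilon}(A)$ are meant to be nonempty. For a point $c\in A_{n+1}$, nothing in the definition of a neighbourhood $\epsilon$-approximation guarantees that some point of $A_n$ lies within $\epsilon_n$ of $c$: the set $A_{n+1}$ is only required to be close to every point of $X$, not to lie near $X$. I would first check whether the convention admits $\emptyset$ as an element. If so, then $\textnormal{diam}(\emptyset)=0<4\epsilon_n$, $\emptyset$ is the maximum of the poset, and the argument above goes through unchanged. Otherwise, I would add the standing assumption that each $c\in A_{n+1}$ has some $a\in A_n$ with $d(a,c)<\epsilon_n$ (for instance by sampling $A_{n+1}$ near $A_n$). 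Under that assumption $q_{n,n+1}(C)\neq\emptyset$ whenever $C\neq\emptyset$, and the rest of the proof is as above.
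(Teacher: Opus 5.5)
Your diameter estimate via $4\epsilon_{n+1}<2\epsilon_n$ and your order-preservation argument are exactly the paper's proof. Your worry about nonemptiness is well founded, because the paper's proof never checks that $q_{n,n+1}(C)\neq\emptyset$.

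Of your two resolutions, only the extra hypothesis is compatible with the rest of the paper. The intended convention is nonempty subsets: the paper later verifies $I_n(C)\neq\emptyset$ and $T_n(C)\neq\emptyset$ explicitly. Moreover, if $\emptyset$ were admitted it would be a maximum, which makes every $\mathcal{U}_{4\epsilon_n}(A_n)$ contractible.

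Some hypothesis like yours is genuinely needed. Even requiring each point of $A_k$ to lie within $\epsilon_k$ of $X$, which the paper implicitly uses later, is not enough. Take $X=\{0\}\subset\mathbb{R}$, $\epsilon_n=1$, $A_n=\{0.99\}$, $\epsilon_{n+1}=0.4$ and $A_{n+1}=\{-0.39\}$. These satisfy all the stated conditions, yet $q_{n,n+1}(\{-0.39\})=\emptyset$.
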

\begin{proof}
Let $C\in \mathcal{U}_{4\epsilon_{n+1}}(A_{n+1})$. Then $\textnormal{diam}(C)<4\epsilon_{n+1}<2\epsilon_n$. Suppose $x,y\in q_{n,n+1}(C)$. This gives that there exist $c_x,c_y\in C$ such that $d(x,c_x),d(y,c_y)< \epsilon_n$. Hence, $d(x,y)\leq d(x,c_x)+d(c_x,c_y)+d(c_y,y)<\epsilon_n+2\epsilon_n+\epsilon_n=4\epsilon_n$, which implies $q_{n,n+1}(C)\in \mathcal{U}_{4\epsilon_n}(A_n)$. Continuity follows since  $q_{n,n+1}$ preserves the order.
\end{proof}

From this, to every compact subspace $X \subset Y$ we may associate an inverse sequence $(\mathcal{U}_{4\epsilon_n}(A_n), q_{n,n+1})$ of finite topological spaces, which we will show is unique up to isomorphism in the pro-category pro-$HTop$ (Corollary~\ref{cor_unicidad}). We refer to such a sequence as a \emph{finite neighbourhood approximative sequence} of $X$. 

Similarly, we define a \emph{finite approximative sequence} as a finite neighbourhood approximative sequence $(\mathcal{U}_{4\delta_n}(B_n), q_{n,n+1})$ where $B_n \subset X$ for all $n \in \mathbb{N}$, and $\{\delta_n\}_{n\in\mathbb{N}}$ is a sequence of positive real numbers satisfying $\delta_{n+1} < \frac{\delta_n}{2}$.

Note that the main difference between these two inverse sequences lies in the location of the sampling points. In the first case, the sampling points $A_n$ need not lie in $X$, whereas in the second case the points $B_n$ are required to be contained in $X$. Hence, finite neighbourhood approximative sequences generalize finite approximative sequences.

Under suitable conditions on the sets $B_n$ and the sequence $\{ \delta_n\}_{n\in \mathbb{N}}$, the inverse limit $\mathcal{X}$ of a finite approximative sequence contains a homeomorphic copy of $X$ that is a strong deformation retract of $\mathcal{X}$ (see \cite[Theorem 4.13 and Theorem 4.15]{chocano2020computational}). This reconstruction property does not generally hold for finite neighbourhood approximative sequences. However, finite neighbourhood approximative sequences offer two key advantages:

(1) Computational efficiency. Sampling points uniformly from $X$ is often infeasible without full knowledge of $X$. In practice, classical examples such as the Warsaw Circle or the Hawaiian Earring are replaced by computational analogues to facilitate sampling (\cite{chocano2020computational,mondejar2020reconstruction}). Moreover, constructing a finite approximative sequence for the curve in Example \ref{ex:lemniscate} requires solving the equation $x^4-x^2+y^2=0$. By considering finite neighbourhood approximative sequence we avoid these situations.

(2) Approximation of algebraic invariants and shape theoretical aspects. These sequences can be used to describe shape theory (see Corollary \ref{cor_shape}) and to compute shape invariants, such as \v{C}ech homology groups. This aligns with Borsuk's original approach to shape theory, where compact metric spaces are embedded in the Hilbert cube and inverse systems of neighbourhoods are used to define the shape category (see \cite[Appendix 2]{mardevsic1982shape})



The following result is central to connecting finite neighbourhood approximative sequences with the theory of finite approximative sequences developed in \cite{chocano2020computational,chocano2025shape}. We prove it in Section \ref{sec_proof}. 

\begin{thm}\label{thm:truncated_normal} Let $X$ be a compact metric space. If $(\mathcal{U}_{4\epsilon_n}(A_n),q_{n,n+1})$ is a finite neighbourhood approximative sequence of $X$ and $(\mathcal{U}_{4\delta_n}(B_n),q_{n,n+1})$ is a finite approximative sequence of $X$, then they are isomorphic in pro-$HTop$.
\end{thm}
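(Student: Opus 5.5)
The plan is to build explicit level morphisms in both directions, check that they commute with the bonding maps up to homotopy, and then show that the two composites are equivalent to the identities in pro-$HTop$. Homotopies will always come from comparabilities $f\leq g$ of order-preserving maps, via Theorem~\ref{thm_homotopia_finitos}. Write $\mathcal{U}_n=\mathcal{U}_{4\epsilon_n}(A_n)$ and $\mathcal{V}_n=\mathcal{U}_{4\delta_n}(B_n)$. Both sequences of scales tend to zero geometrically, since $\epsilon_{n+1}<\epsilon_n/2$ and $\delta_{n+1}<\delta_n/2$. Hence for every $n$ we can choose an index $m=\varphi(n)$, increasing in $n$, such that $\delta_m$ is much smaller than $\epsilon_n$; we can likewise choose $\psi(n)$ with $\epsilon_{\psi(n)}$ much smaller than $\delta_n$. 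A fixed factor such as $\delta_m<\epsilon_n/8$ should suffice.

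\emph{Step 1 (maps).} Define $f_n:\mathcal{V}_{\varphi(n)}\to \mathcal{U}_n$ by $f_n(C)=\bigcup_{c\in C}(B(c,\epsilon_n)\cap A_n)$. This is nonempty because $c\in B_{\varphi(n)}\subset X$ and $A_n$ is a neighbourhood $\epsilon_n$-approximation. Its diameter is less than $2\epsilon_n+\operatorname{diam}C<2\epsilon_n+4\delta_{\varphi(n)}<4\epsilon_n$. It preserves the order because it is monotone under inclusion.

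In the other direction, points of $A_m$ need not lie in $X$, so I use compactness. Choose $m=\psi(n)$ so small that every $a\in A_m$ that is relevant lies within roughly $\epsilon_m$ of $X$. This holds if we shrink $A_m$ to the points that witness the approximation; in general, every $a\in A_m$ lies within some distance of $X$ only if it is such a witness. To handle arbitrary points of $A_m$, define
\[
g_n(D)=\{b\in B_n : d(b,D)<\delta_n\}
\]
whenever this set is nonempty. The main obstacle is points of $A_m$ far from $X$, which are genuinely present in $\mathcal{U}_m$. I would first try to show that such sets $D$, which lie far from $X$, can be sent to something harmless. The better fix is to replace $\mathcal{U}_m$ by the subposet $\mathcal{U}'_m$ of sets $D$ with $d(D,X)<\epsilon_m$, and show that the level map $q$ lands in it. Indeed, for $D\in\mathcal{U}_{m+1}$, the set $q_{m,m+1}(D)$ contains the point $a$ nearest to $X$ only if that point is within $\epsilon_m$; in general $q_{m,m+1}(D)$ can be empty-free only if $d(D,A_m)<\epsilon_m$. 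So I would verify carefully that the bonding maps are defined exactly on sets near $X$. Once this is settled, the problem reduces to sets $D$ with $d(D,X)<2\epsilon_m$. For these, $g_n(D)$ is nonempty, since there is $x\in X$ close to $D$ and $b\in B_n$ with $d(b,x)<\delta_n$. Its diameter is less than $2\delta_n+4\epsilon_m<4\delta_n$.

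\emph{Step 2 (commutativity and composites).} For each square, e.g.\ $q_{n,n+1}\circ f_{n+1}\circ(\text{bonding})$ versus $f_n\circ(\text{bonding})$, both sides send $C$ to sets of points of $A_n$ within a bounded multiple of $\epsilon_n$ of $C$. Their union $E$ still has diameter less than $4\epsilon_n$ once $\varphi$ is chosen with enough room. Then $f\geq E\leq f'$ in the reversed-inclusion order, so the maps are comparable to the common map $C\mapsto E$, and Theorem~\ref{thm_homotopia_finitos} gives the homotopy. The composites $g\circ f$ and $f\circ g$ are handled the same way: compared with the bonding map $q_{n,\varphi\psi(n)}$, which is an iterated neighbourhood union, both map a set to points within $O(\epsilon_n)$ (resp.\ $O(\delta_n)$) of it, and the union map lies in the correct poset.

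\emph{Step 3 (diameter bookkeeping).} This is the step that needs care. The iterated bonding map $q_{n,k}$ enlarges sets by at most $\epsilon_n+\epsilon_{n+1}+\dots<2\epsilon_n$. Together with the factor-of-8 gaps, this keeps all unions below $4\epsilon_n$ (resp.\ $4\delta_n$). I expect the handling of far-away points of $A_n$ in Step 1 to be the real difficulty, with Step 3 being routine. If needed, I would pass to the cofinal subsequence obtained as the images of the bonding maps, which is isomorphic in pro-$HTop$.
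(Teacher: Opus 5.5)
There is a genuine gap, and it is the one you flag yourself. You never settle how to treat points of $A_m$ far from $X$, and the repairs you sketch do not work.

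\textbf{Your repairs fail.} The bonding maps do not land in the subposet $\mathcal{U}'_m$ of sets near $X$. If $p\in A_m\cap A_{m+1}$ with $d(p,X)$ large, then $p\in q_{m,m+1}(\{p\})=B(p,\epsilon_m)\cap A_m$. Shrinking $A_m$ to ``witnesses'' changes the pro-object, so you would still have to prove it isomorphic to the original. Passing to images of bonding maps does not remove $p$ either.

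\textbf{No repair can work under the literal definition.} Under the literal definition of a neighbourhood $\epsilon$-approximation, the statement is false. Take $X=\{0\}\subset\mathbb{R}$, $B_n=\{0\}$, $A_n=\{0,10\}$ and $\epsilon_n=3^{-n}$. Then $\mathcal{U}_{4\epsilon_n}(A_n)=\{\{0\},\{10\}\}$ is a discrete two-point space with identity bonding maps, while $\mathcal{U}_{4\delta_n}(B_n)$ is a point. The inverse limits of $H_0$ are $\mathbb{Z}^2$ and $\mathbb{Z}$, so the two sequences are not isomorphic in pro-$HTop$.

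\textbf{The missing ingredient is a hypothesis.} What is needed is not a cleverer map but the assumption that every point of $A_n$ lies within $\epsilon_n$ of $X$. The paper's proof uses exactly this, implicitly. To show $T_n(C)\neq\emptyset$, it asserts ``by the hypothesis'' that $B(c,\epsilon_{T(n)})\cap X\neq\emptyset$ for every $c\in A_{T(n)}$.

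\textbf{With that hypothesis, your outline matches the paper's.} Your $f_n$ is its $I_n$, with $\delta_{I(n)}<\epsilon_n/16$. Comparing each pair of maps with their pointwise union and invoking Theorem~\ref{thm_homotopia_finitos} is how the paper obtains all four homotopies.

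\textbf{Your $g_n$ still needs a fix.} With radius $\delta_n$, the set $g_n(D)$ can be empty. From $d(c,x)<\epsilon_m$ and $d(x,b)<\delta_n$ you only get $d(c,b)<\epsilon_m+\delta_n$. The paper instead takes $T_n(C)=\bigcup_{c\in C}(B(c,\epsilon_{T(n)}+\delta_n)\cap B_n)$ with $\epsilon_{T(n)}<\delta_n/16$. This set is nonempty and has diameter below $2\delta_n+\delta_n/8+\delta_n/4<4\delta_n$.

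\textbf{Your worry about the bonding maps is legitimate.} You suspected that $q_{m,m+1}(D)$ might be empty. Even under the hypothesis, for $c\in A_{m+1}$ one only gets $d(c,A_m)<\epsilon_m+\epsilon_{m+1}$. The paper's lemma on $q_{n,n+1}$ checks only the diameter, so that step also needs some slack in the constants.
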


Theorem \ref{thm:truncated_normal} shows that finite neighbourhood approximative sequences are sufficient for studying shape-theoretic properties and for reconstructing algebraic invariants of compact metric spaces. In particular, shape theory can be described—and even defined—in terms of finite neighbourhood approximative sequences. This conclusion is supported by the results of \cite{chocano2025shape}, where shape theory is characterized using finite approximative sequences. In that paper, the authors consider a category whose objects are compact metric spaces and whose morphisms are defined by means of morphisms between their associated finite approximative sequences, viewed as objects in pro-$HTop$. They later prove that this category is isomorphic to the shape category of compact metric spaces.

\begin{cor}\label{cor_shape} Let $X$ and $Z$ be two compact metric spaces. Suppose $(\mathcal{U}_{4\epsilon_n}(A_n),q_{n,n+1})$ and $(\mathcal{U}_{4\tau_n}(C_n),q_{n,n+1})$ are finite neighbourhood approximative sequences of $X$ and $Z$, respectively. Then these sequences are isomorphic in pro-$HTop$ if and only if $X$ and $Z$ have the same shape. 
\end{cor}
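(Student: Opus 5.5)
Corollary~\ref{cor_shape} is a direct combination of Theorem~\ref{thm:truncated_normal} with the intrinsic description of shape from \cite{chocano2025shape}. First, for each of $X$ and $Z$ fix a finite approximative sequence: choose $\delta_n$ with $\delta_{n+1}<\delta_n/2$ and finite $\delta_n$-dense subsets $B_n\subset X$, which exist by compactness. Similarly choose $\sigma_n$ and $D_n\subset Z$. This gives $(\mathcal{U}_{4\delta_n}(B_n),q_{n,n+1})$ for $X$ and $(\mathcal{U}_{4\sigma_n}(D_n),q_{n,n+1})$ for $Z$.

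By Theorem~\ref{thm:truncated_normal}, $(\mathcal{U}_{4\epsilon_n}(A_n),q_{n,n+1})\cong(\mathcal{U}_{4\delta_n}(B_n),q_{n,n+1})$ in pro-$HTop$. Likewise $(\mathcal{U}_{4\tau_n}(C_n),q_{n,n+1})\cong(\mathcal{U}_{4\sigma_n}(D_n),q_{n,n+1})$. Since isomorphism is transitive, the two neighbourhood sequences are isomorphic in pro-$HTop$ if and only if the two finite approximative sequences are. So it suffices to prove: the finite approximative sequences of $X$ and $Z$ are isomorphic in pro-$HTop$ if and only if $X$ and $Z$ have the same shape.

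This last equivalence is what \cite{chocano2025shape} provides. There, a category is built whose objects are compact metric spaces and whose morphisms $X\to Z$ are pro-$HTop$ morphisms between their finite approximative sequences. This category is shown to be isomorphic to the shape category of compact metric spaces via a functor that is the identity on objects. An isomorphism of finite approximative sequences in pro-$HTop$ is exactly an isomorphism $X\to Z$ in that category. Under the isomorphism of categories, it corresponds to an isomorphism in the shape category, that is, to $X$ and $Z$ having the same shape. The converse runs the same way.

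The main point needing care is the independence of choices. The category in \cite{chocano2025shape} is defined using some finite approximative sequence for each space, and these sequences may carry extra hypotheses on $\delta_n$ and $B_n$. One must check that our chosen $B_n, D_n$ satisfy those hypotheses, or else that any two finite approximative sequences of the same space are isomorphic in pro-$HTop$. The latter already follows from Theorem~\ref{thm:truncated_normal}, because a finite approximative sequence is a special case of a neighbourhood one, and this is how I would settle it.
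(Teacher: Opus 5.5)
Your proposal is correct and is essentially the paper's proof. The paper likewise fixes finite approximative sequences of $X$ and $Z$, identifies each neighbourhood sequence with one of them in pro-$HTop$ via Theorem~\ref{thm:truncated_normal}, and concludes with the shape characterization in \cite[Theorem 3.1]{chocano2025shape}; at the first step its text cites Theorem~\ref{thm_homotopia_finitos}, evidently a slip for the theorem you correctly use, and your extra remark on independence of the chosen finite approximative sequences, which you settle by applying Theorem~\ref{thm:truncated_normal} to two such sequences, is sound though the paper leaves it implicit.
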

\begin{proof}
Let us consider finite approximative sequences $(\mathcal{U}_{4\delta_n}(B_n),q_{n,n+1})$ and $(\mathcal{U}_{4\gamma_n}(D_n),q_{n,n+1})$ of $X$ and $Z$, respectively. By Theorem \ref{thm_homotopia_finitos}, $(\mathcal{U}_{4\delta_n}(B_n),q_{n,n+1})$ is isomorphic to $(\mathcal{U}_{4\epsilon_n}(A_n),q_{n,n+1})$ in pro-$HTop$ and $(\mathcal{U}_{4\gamma_n}(D_n),q_{n,n+1})$ is isomorphic to $(\mathcal{U}_{4\tau_n}(C_n),q_{n,n+1})$ in pro-$HTop$. Moreover, $X$ and $Z$ have the same shape if and only if  $(\mathcal{U}_{4\delta_n}(B_n),q_{n,n+1})$ and $(\mathcal{U}_{4\gamma_n}(D_n),q_{n,n+1})$ are isomorphic in pro-$HTop$ by \cite[Theorem 3.1]{chocano2025shape}. From this and the previous comment, we deduce the desired result.
\end{proof}

\begin{cor}\label{cor_reconstruct_homology} Let $X$ be a compact metric space and let $(\mathcal{U}_{4\epsilon_n}(A_n),q_{n,n+1})$ be a finite neighbourhood approximative sequence of $X$. Then the $i$-th \v{C}ech homology group $\check{H}_i(X)$ is isomorphic to the inverse limit $\varprojlim (H_i(\mathcal{U}_{4\epsilon_n}(A_n)),H_i(q_{n,n+1}))$.
\end{cor}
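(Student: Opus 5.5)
The plan is to pass through the finite approximative sequence, whose relationship to \v{C}ech homology is already known. Let $(\mathcal{U}_{4\delta_n}(B_n),q_{n,n+1})$ be any finite approximative sequence of $X$, built from finite sets $B_n\subset X$ forming $\delta_n$-approximations (for instance, finite $\delta_n$-nets of the compact space $X$). By Theorem~\ref{thm:truncated_normal}, $(\mathcal{U}_{4\epsilon_n}(A_n),q_{n,n+1})$ and $(\mathcal{U}_{4\delta_n}(B_n),q_{n,n+1})$ are isomorphic in pro-$HTop$.

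Next, apply the singular homology functor $H_i$ levelwise. Homotopic maps induce equal homomorphisms, so $H_i$ factors through $HTop$. It therefore induces a functor from pro-$HTop$ to pro-$Grp$ (in fact to pro-$Ab$), and this functor sends the isomorphism above to an isomorphism of inverse sequences of groups
\[(H_i(\mathcal{U}_{4\epsilon_n}(A_n)),H_i(q_{n,n+1}))\cong (H_i(\mathcal{U}_{4\delta_n}(B_n)),H_i(q_{n,n+1}))\]
in pro-$Grp$. The inverse limit is a functor on pro-$Grp$, because equivalent level morphisms induce the same map on limits. Hence the two inverse limits are isomorphic groups.

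It remains to identify $\varprojlim (H_i(\mathcal{U}_{4\delta_n}(B_n)),H_i(q_{n,n+1}))$ with $\check{H}_i(X)$. Here I would use the finite approximative sequence theory of \cite{chocano2020computational,chocano2025shape}. By \cite[Theorem 3.1]{chocano2025shape} (the shape-theoretic characterization already invoked in the proof of Corollary~\ref{cor_shape}), the finite approximative sequence is, up to isomorphism in pro-$HTop$, an $HPol$-expansion of $X$ after McCord's replacement. Concretely, the level maps can be transported by McCord's theorem: composing with the weak equivalences $f_{\mathcal{U}}:|\mathcal{K}(\mathcal{U})|\to\mathcal{U}$, which are natural with respect to $\mathcal{K}(q_{n,n+1})$, gives an isomorphic inverse sequence of homology groups of polyhedra $|\mathcal{K}(\mathcal{U}_{4\delta_n}(B_n))|$. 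These polyhedra are essentially nerves or Vietoris–Rips type complexes of finer and finer covers of $X$, and their sequence is an $HPol$-expansion of $X$. \v{C}ech homology is the limit of homology over any $HPol$-expansion (\cite{mardevsic1982shape}), which yields $\check{H}_i(X)$.

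The main obstacle is this last identification: justifying that the sequence $(|\mathcal{K}(\mathcal{U}_{4\delta_n}(B_n))|,|\mathcal{K}(q_{n,n+1})|)$ is an $HPol$-expansion of $X$, or equivalently that its homology limit computes \v{C}ech homology. I would rely on the results of \cite{chocano2025shape}, which show that finite approximative sequences define the shape category, together with the fact that \v{C}ech homology is a shape invariant computed from any expansion. If a direct citation is unavailable, I would instead compare with the cofinal sequence of nerves of the covers $\{B(b,2\delta_n)\cap X\}_{b\in B_n}$ via the Dowker/nerve interleaving.
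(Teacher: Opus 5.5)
Your argument is correct and matches the paper's proof: fix a finite approximative sequence, apply $H_i$ to the pro-$HTop$ isomorphism of Theorem~\ref{thm:truncated_normal} to get an isomorphism of homology pro-groups, and use that isomorphic pro-groups have isomorphic inverse limits. The step you flag as the main obstacle is handled in the paper by citing \cite[Proposition 3.2]{chocano2020computational}, so no Dowker/nerve comparison is needed; note also that the $HPol$-expansion fact you want comes from \cite[Section 2]{chocano2020computational}, whereas \cite[Theorem 3.1]{chocano2025shape} is the shape-classification statement.
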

\begin{proof}
Let $(\mathcal{U}_{4\delta_n}(B_n),q_{n,n+1})$ be a finite approximative sequence of $X$. By \cite[Proposition 3.2]{chocano2020computational}, we have $\check{H}_i(X)\simeq \varprojlim (H_i(\mathcal{U}_{4\delta_n}(B_n)),H_i(q_{n,n+1}))$. On the other hand, by Theorem \ref{thm:truncated_normal}, we deduce that $(H_i(\mathcal{U}_{4\delta_n}(B_n)),H_i(q_{n,n+1}))$ is isomorphic to $(H_i(\mathcal{U}_{4\epsilon_n}(A_n)),H_i(q_{n,n+1}))$ in the pro-category of abelian groups, which shows the desired result.
\end{proof}

We can in fact state a stronger result:
\begin{cor}\label{cor_expansions} Let $X$ be a compact metric space and let  $(\mathcal{U}_{4\epsilon_n}(A_n),q_{n,n+1})$ be  a finite neighbourhood approximative sequences of $X$. Then $(\mathcal{K}(\mathcal{U}_{4\epsilon_n}(A_n)),\mathcal{K}(q_{n,n+1}))$ is a $HPol$-expansion of $X$.
\end{cor}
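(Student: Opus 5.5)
The plan is to transfer the known expansion property from finite approximative sequences using Theorem~\ref{thm:truncated_normal}. Fix a finite approximative sequence $(\mathcal{U}_{4\delta_n}(B_n),q_{n,n+1})$ of $X$. From the earlier work \cite{chocano2020computational,chocano2025shape}, I will take as given that $(\mathcal{K}(\mathcal{U}_{4\delta_n}(B_n)),\mathcal{K}(q_{n,n+1}))$ is an $HPol$-expansion of $X$. This rests on identifying $\mathcal{K}(\mathcal{U}_{4\delta_n}(B_n))$, up to homotopy, with the nerve of the finite cover of $X$ by the sets $\{x\in X : \text{some } b\in B_n \text{ is } \delta_n\text{-close}\}$, and on the classical fact that a cofinal sequence of nerves of open covers is an $HPol$-expansion of a compact metric space. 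I will apply the functor $\mathcal{K}$ followed by geometric realization. Together with McCord's theorem (weak equivalences $f_X$ natural in $X$), this gives a functor from pro-$HTop$ restricted to finite spaces to pro-$HPol$. The point is that homotopic maps between finite spaces induce homotopic maps between realizations: by Theorem~\ref{thm_homotopia_finitos} a homotopy is a fence $f_1\le f_2\ge\cdots$, and comparable maps $f\le g$ induce contiguous simplicial maps $\mathcal{K}(f),\mathcal{K}(g)$, which are therefore homotopic.

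Applying this functor to the pro-$HTop$ isomorphism of Theorem~\ref{thm:truncated_normal} yields an isomorphism in pro-$HPol$
\[
\Phi:(\mathcal{K}(\mathcal{U}_{4\delta_n}(B_n)),\mathcal{K}(q_{n,n+1}))\to(\mathcal{K}(\mathcal{U}_{4\epsilon_n}(A_n)),\mathcal{K}(q_{n,n+1})).
\]
If $p:X\to \mathbf{X}_B$ denotes the given expansion, I then set $p'=\Phi\circ p$. A general fact about expansions (see \cite[Chapter 1, Section 2]{mardevsic1982shape}) is that composing an expansion with an isomorphism in pro-$\mathcal{P}$ is again an expansion. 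Indeed, for $h:X\to\mathbf{Y}$ the unique factorization $h=f\circ p$ gives $h=(f\Phi^{-1})\circ p'$. Uniqueness transfers because $g\circ p'=h$ forces $g\Phi=f$.

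The main obstacle is making the first step precise. One needs that the realizations of the order complexes, rather than merely the finite spaces, are the correct polyhedral expansion. One also needs that the functor $\mathcal{K}$ is well defined on pro-$HTop$ morphisms between finite spaces: weak equivalences do not a priori descend to homotopy classes, so I must use the contiguity argument above rather than McCord's theorem alone. If the cited results only give that the finite spaces themselves form an $HTop$-type expansion, I would instead verify the two Mardešić expansion conditions (M1) and (M2) directly for the nerve-like polyhedra $\mathcal{K}(\mathcal{U}_{4\delta_n}(B_n))$. For this I would use that $X$ maps into them by a partition of unity subordinate to the $\delta_n$-balls, and that any map from $X$ to an ANR polyhedron factors up to homotopy through a sufficiently fine nerve.
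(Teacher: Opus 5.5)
Your proposal is correct and is essentially the paper's argument: fix a finite approximative sequence, use the fact cited from \cite{chocano2020computational} that its order complexes form an $HPol$-expansion of $X$, and transport this along the pro-$HTop$ isomorphism of Theorem~\ref{thm:truncated_normal}. You also make explicit two steps the paper leaves implicit: that $\mathcal{K}$ descends to homotopy classes of maps between finite spaces, and that composing an expansion with an isomorphism in pro-$HPol$ yields an expansion.

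One small correction: $f\le g$ does not in general make $\mathcal{K}(f)$ and $\mathcal{K}(g)$ contiguous; it only places them in the same contiguity class. You get that class by changing $f$ into $g$ one point at a time, starting from the maximal points so that every intermediate map is order-preserving and consecutive maps are contiguous. This still gives $|\mathcal{K}(f)|\simeq|\mathcal{K}(g)|$, which is all you use.
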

\begin{proof}
By Theorem \ref{thm:truncated_normal}, $(\mathcal{U}_{4\epsilon_n}(A_n),q_{n,n+1})$ is isomorphic to any finite approximative sequence of $X$. Consequently, $(\mathcal{K}(\mathcal{U}_{4\epsilon_n}(A_n)),\mathcal{K}(q_{n,n+1}))$ is isomorphic to a $HPol$-expansion (see the discussion of \cite[Section 2]{chocano2020computational}).
\end{proof}

For the notion of expansion and its role in shape theory, see \cite[Chapter I, Section 4]{mardevsic1982shape}. As a direct consequence of this result, other shape invariants--such as the shape dimension (see \cite[Chapter II, Section 1]{mardevsic1982shape}) and the shape groups (see \cite[Chapter II, Section 3]{mardevsic1982shape})--can be derived from the structure of the inverse sequence $(\mathcal{U}_{4\epsilon_n}(A_n),q_{n,n+1})$. 

Finally, we establish the uniqueness of the construction introduced in this section:

\begin{cor}\label{cor_unicidad}
Let $X$ be a compact metric space. Suppose that $(\mathcal{U}_{4\epsilon_n}(A_n),q_{n,n+1})$ and $(\mathcal{U}_{4\tau_n}(C_n),q_{n,n+1})$ are finite neighbourhood approximative sequences of $X$. Then the inverse sequences  $(\mathcal{U}_{4\epsilon_n}(A_n),q_{n,n+1})$ and $(\mathcal{U}_{4\tau_n}(C_n),q_{n,n+1})$ are isomorphic in pro-$HTop$.
\end{cor}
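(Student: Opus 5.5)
The plan is to reduce Corollary~\ref{cor_unicidad} to Theorem~\ref{thm:truncated_normal} and use transitivity of isomorphism in pro-$HTop$. Since $X$ is compact, it has a finite approximative sequence. For each $n$, the open cover $\{B(x,\delta_n)\}_{x\in X}$ has a finite subcover, and its centres give a finite set $B_n\subset X$ such that every point of $X$ is within $\delta_n$ of some point of $B_n$. Taking, for instance, $\delta_n=3^{-n}$ ensures $\delta_{n+1}<\delta_n/2$. The resulting sequence $(\mathcal{U}_{4\delta_n}(B_n),q_{n,n+1})$ is then a finite approximative sequence of $X$. This is because a finite set $B_n\subset X\subset Y$ with this covering property is by definition a neighbourhood $\delta_n$-approximation, and the bonding maps are well defined and continuous by the lemma preceding Theorem~\ref{thm:truncated_normal}.

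Next I apply Theorem~\ref{thm:truncated_normal} twice with this fixed sequence. First, $(\mathcal{U}_{4\epsilon_n}(A_n),q_{n,n+1})$ is isomorphic in pro-$HTop$ to $(\mathcal{U}_{4\delta_n}(B_n),q_{n,n+1})$, giving an isomorphism $\phi$. Second, $(\mathcal{U}_{4\tau_n}(C_n),q_{n,n+1})$ is isomorphic in pro-$HTop$ to the same sequence, giving an isomorphism $\psi$. In pro-$HTop$, isomorphisms are closed under inverses and composition, so $\psi^{-1}\circ\phi$ is an isomorphism between the two given finite neighbourhood approximative sequences. This proves the claim.

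I expect no real obstacle here. The only point needing care is to check that the hypotheses of Theorem~\ref{thm:truncated_normal} hold in both applications: the same compact $X\subset Y$ is used throughout, $\{\epsilon_n\}$ and $\{\tau_n\}$ each satisfy the halving condition, and the constructed $\{\delta_n\}$ does as well. It also helps to note that a finite approximative sequence of $X$ is independent of the ambient $Y$, so the same $B_n$ work for both comparisons. All the substantive work is contained in Theorem~\ref{thm:truncated_normal}.
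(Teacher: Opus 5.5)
Your proof is correct, and it is a little more economical than the paper's. The paper's one-line proof compares each of the two given sequences with \emph{some} finite approximative sequence. It then links those two finite approximative sequences by \cite[Theorem 5.1]{chocano2020computational}, which gives uniqueness of finite approximative sequences up to isomorphism in pro-$HTop$. (The proof cites Theorem~\ref{thm_homotopia_finitos}, but, as in the proof of Corollary~\ref{cor_shape}, Theorem~\ref{thm:truncated_normal} is presumably what is meant.) You instead route both comparisons through one and the same finite approximative sequence $(\mathcal{U}_{4\delta_n}(B_n),q_{n,n+1})$. This means the external uniqueness theorem is never needed, and the corollary rests on Theorem~\ref{thm:truncated_normal} alone. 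In fact, a finite approximative sequence is a special case of a finite neighbourhood approximative sequence, so your argument shows that Theorem~\ref{thm:truncated_normal} already implies that uniqueness result for the sequences as defined here. You also make explicit the existence of a finite approximative sequence, via finite $\delta_n$-nets from compactness with $\delta_n=3^{-n}$; both routes require this, but the paper leaves it tacit. The paper's version is shorter only because it outsources the linking step to earlier work.
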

\begin{proof}
This follows immediately from Theorem \ref{thm_homotopia_finitos} and \cite[Theorem 5.1]{chocano2020computational}.
\end{proof}


\section{Method to approximate invariants and examples}\label{sec_algorithm_examples}

In this section, we present a computational approach to approximating algebraic invariants of a compact topological space $X \subset \mathbb{R}^n$, where $X$ is defined by

\[
X = \{(x_1, x_2, \ldots, x_n) \in \mathbb{R}^n \mid f(x_1, \ldots, x_n) = 0\},
\]

with $f : \mathbb{R}^n \to \mathbb{R}$ a continuous map. We restrict our study to the intersection $X \cap Q$, where $Q \subset \mathbb{R}^n$ is a fixed $n$-dimensional cube.

We propose the following iterative procedure to approximate the homological invariants of $X\cap Q$:
\begin{enumerate}
\item Initialization. Set $\epsilon_1=a>\textnormal{diam}(Q)$, choose a point $y\in Q$, and define $A_1=\{ y\}$. Then $\mathcal{U}_{4\epsilon_1}(A_1)=\{y \}$. Initialize $j=1$.

\item Sampling. Set $\epsilon_{j+1}=\frac{a}{2^{j+1}}$. Compute a neighbourhood $\epsilon_{j+1}$-approximation $A_{j+1}$ of $X\cap Q$. This can be done, for example, by:
\begin{itemize}
\item Sampling points from a uniform grid over $Q$, or
\item Drawing points from a uniform random distribution over $Q$, and retaining those sufficiently close to $X$.
\end{itemize}

\item Homology computation. Compute $$\varprojlim (H_i(\mathcal{U}_{4\epsilon_s}(A_s)),H_i(q_{s,s+1}),J),$$ where $(H_i(\mathcal{U}_{4\epsilon_s}(A_s)),H_i(q_{s,s+1}),J)$ is an inverse sequence indexed by $J=\{0,1,2,...,j+1 \}$. Set $j=j+1$ and return to Step $2$.
\end{enumerate}
To simplify computation, we may instead estimate the Betti numbers at each stage. Let $\beta_i^j=\textnormal{rank}(H_{i}(\mathcal{U}_{4\epsilon_j}(A_j) ))$ for $i=0,1,...,n-1$. We then modify Step 3 as follows: 
\begin{itemize}
\item[3.] Compute $\beta_i^{j+1}$ for each $i=0,...,m-1$.
\end{itemize}

Stopping criterion. The method may terminate under one of the following conditions:

\begin{itemize}
\item Stabilization: If $\beta_i^j=\beta_i^{j+1}=\cdots=\beta_i^{j+s}$ for some fixed $s\in \mathbb{N}$ and all $i$.
\item Iteration limit: If a maximum number of iterations is reached without stabilization.
\end{itemize}

Note that stabilization may not always occur. For example, in the case of the Cantor set, the $0$-th Betti number $\beta_0^j$ increases with $j$, reflecting the growing number of disconnected components as resolution improves.

In the following examples, we use the mathematical software R to compute the Betti numbers at various stages of the approximation process.

\begin{ex}\label{ex_lemniscata_general} Consider the set $$X=\{(x,y)\in \mathbb{R}^2|x^4-x^2+y^2=0\},$$ which defines the lemniscate of Gerono. We study $X$ within the square domain $[-3,3]\times[-3,3]$. Define the sequence $\epsilon_n=\frac{3}{2^n}$. For each $n\in \mathbb{N}$, we construct the set $$A_n=\{(a,b)| a,b\in \{-3,-3+\frac{3}{2^{n}},...,0,...,3-\frac{3}{2^{n}},3 \} \textnormal{ and } |a^4-a^2+b^2|<\frac{3}{2^n}\}.$$ 
Figure \ref{fig:four_images} illustrates the sets $A_n$ for $n=3,4,5,6$. Each $A_n$ consists of points lying within a neighbourhood of $X$. As $n$ increases, the neighbourhood becomes increasingly refined, providing a more accurate approximation of the curve.

To conclude this example, we compute the Betti numbers of $\mathcal{U}_{4\epsilon_n}(A_n)$ for the initial values of $n$. These results are presented in Figure \ref{figure_lemniscata}.  

\begin{figure}[h!]
\centering
\begin{tikzpicture}
    \begin{axis}[
        xlabel={Values of $j$},
        ylabel={Values of the Betti numbers},
        grid=major,
        xtick={1,2,...,10},
        ytick={0,1,2},
        legend pos=north west,
        legend style={font=\small}
    ]
    \addplot[color=blue,mark=*] coordinates {
        (1,1) (2,1) (3,1) (4,1) (5,1) (6,1) (7,1) (8,1) (9,1) (10,1)
    };
    \addlegendentry{$\beta_0^j$}
    \addplot[color=red,mark=square] coordinates {
        (1,0) (2,0) (3,0) (4,0) (5,2) (6,2) (7,2) (8,2) (9,2) (10,2)
    };
    \addlegendentry{$\beta_1^j$}
    \end{axis}
\end{tikzpicture}
\caption{Betti numbers $\beta_i^j$ for $i=0,1$ and $j=1,...,10$ from Example \ref{ex_lemniscata_general}}\label{figure_lemniscata}
\end{figure}

\begin{figure}[ht]
    \centering
    \begin{subfigure}[b]{0.45\textwidth}
        \centering
        \includegraphics[width=\textwidth]{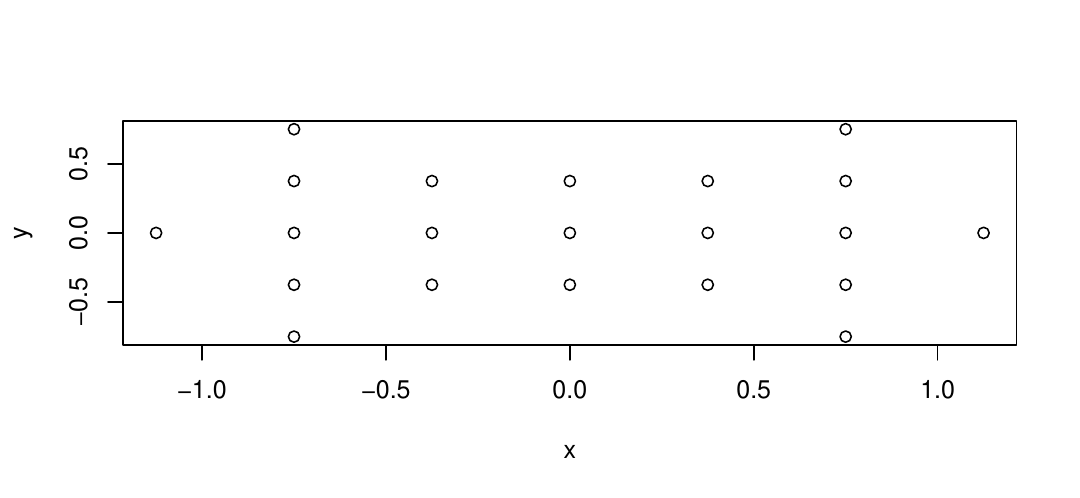}
        \caption{The set of points $A_3$ from Example \ref{ex_lemniscata_general}.}
        \label{fig:U_3_lem}
    \end{subfigure}
    \hfill
    \begin{subfigure}[b]{0.45\textwidth}
        \centering
        \includegraphics[width=\textwidth]{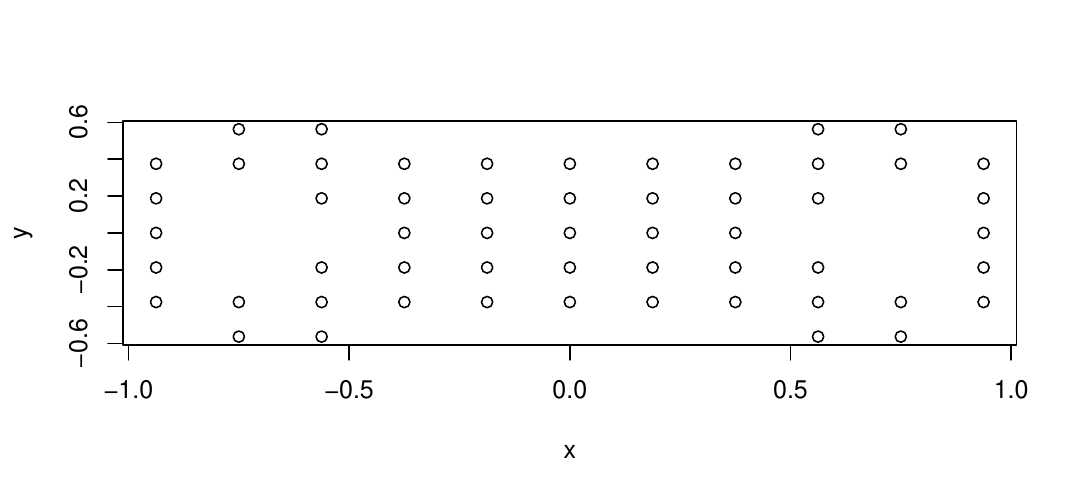}
        \caption{The set of points $A_4$  from Example \ref{ex_lemniscata_general}.}
        \label{fig:U_4_lem}
    \end{subfigure}
    \vskip\baselineskip
    \begin{subfigure}[b]{0.45\textwidth}
        \centering
        \includegraphics[width=\textwidth]{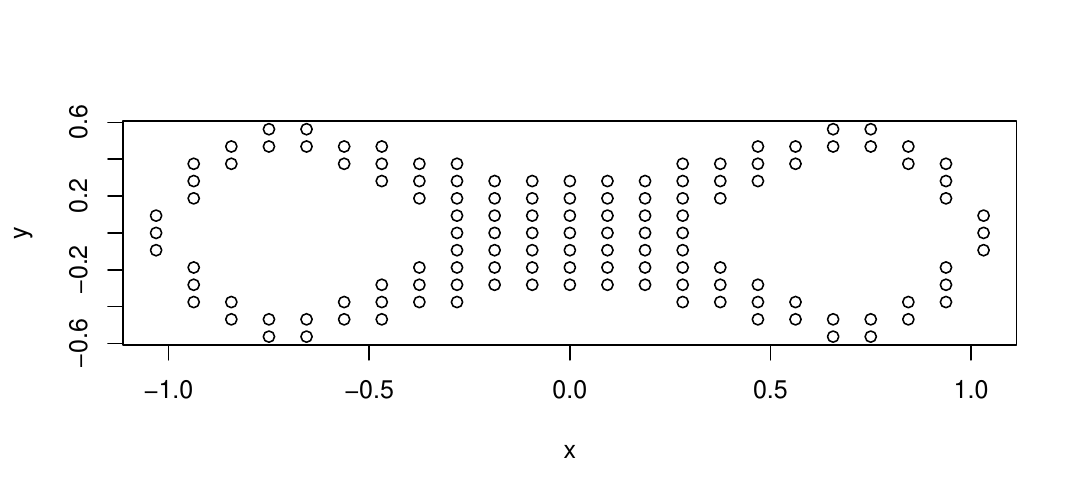}
        \caption{The set of points $A_5$ from Example \ref{ex_lemniscata_general}.}
        \label{fig:U_5_lem}
    \end{subfigure}
    \hfill
    \begin{subfigure}[b]{0.45\textwidth}
        \centering
        \includegraphics[width=\textwidth]{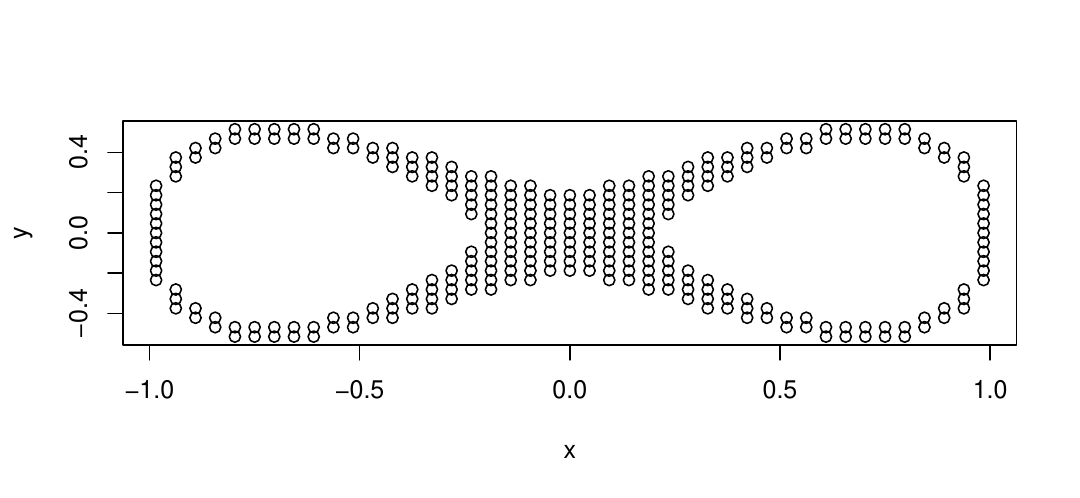}
        \caption{The set of points $A_6$  from Example \ref{ex_lemniscata_general}.}
        \label{fig:U_6_lem}
    \end{subfigure}
    \caption{Set of points $A_n$ for some values of $n$ from Example \ref{ex_lemniscata_general}.}
    \label{fig:four_images}
\end{figure}

As expected, we observe that $\beta_0^j=1$ for all $j$, indicating a single connected component. Starting from $j = 5$, the first Betti number $\beta_1 = 2$, revealing the presence of two one-dimensional holes in the sampled space.
\end{ex}

\begin{ex}\label{ex_hyper} Consider the set $$X=\{(x,y)\in \mathbb{R}^2| y^2-x^4+5x^2-4=0 \}.$$ We study $X$ within the region $[-3,3]\times[3,3]$. Define the sequence $\epsilon_n=\frac{3}{2^n}$. For each $n\in \mathbb{N}$, we construct the set $$A_n=\{(a,b)| a,b\in \{-3,-3+\frac{1}{2^{n+1}},...,0,...,3-\frac{1}{2^{n+1}},3 \} \textnormal{ and } |b^2-a^4+5a^2-4|<\frac{3}{2^n}\}.$$ We compute the Betti numbers of $\mathcal{U}_{4\epsilon_n}(A_n)$ for $n=1,...,10$. These results are summarized in Figure \ref{figure_hyper}. From this approximation, it appears that $X$ has three connected components in $[-3,3]\times [-3,3]$ and there is also one one-dimensional hole.

To localize the components, we subdivide the domain into three vertical strips:  $[-3,-2]\times [-3,3]$, $[-2,2]\times [-3,3]$ and $[2,3]\times [-3,3]$. Applying the algorithm to each subregion, we observe that:
\begin{itemize}
\item The central component, located in $[-2,2]\times [-3,3]$, contains the non-trivial first Betti number ($\beta_1$).
\item The left and right components, located in the outer strips, have $\beta_1=0$.
\end{itemize}

To support this, one may visualize the sets $A_n$ (when feasible), or apply clustering techniques to identify regions of high potential density. These methods can guide the application of the algorithm to specific subregions of interest.
\begin{figure}[h!]
\centering
\begin{tikzpicture}
    \begin{axis}[
        xlabel={Values of $j$},
        ylabel={Values of the Betti numbers},
        grid=major,
        xtick={1,2,...,10},
        ytick={0,1,2,3},
        legend pos=north west,
        legend style={font=\small}
    ]
    \addplot[color=blue,mark=*] coordinates {
        (1,1) (2,1) (3,1) (4,3) (5,3) (6,3) (7,3) (8,3) (9,3) (10,3)
    };
    \addlegendentry{$\beta_0^j$}
    \addplot[color=red,mark=square] coordinates {
        (1,0) (2,0) (3,1) (4,1) (5,1) (6,1) (7,1) (8,1) (9,1) (10,1)
    };
    \addlegendentry{$\beta_1^j$}
    \end{axis}
\end{tikzpicture}
\caption{Betti numbers $\beta_i^j$ for $i=0,1$ and $j=1,...,10$ from Example \ref{ex_hyper}}\label{figure_hyper}
\end{figure}
\end{ex}

To conclude, we provide a classical example from shape theory: the Hawaiian earring.

\begin{ex}\label{ex_hawaiian} Consider $$X=\bigcup_{i\in \mathbb{N}} \{(x,y)\in \mathbb{R}^2|(x-\frac{4}{2^i})^2+y^2-(\frac{4}{2^i})^2=0 \}.$$ We study $X$ within the region $[-4,4]\times[4,4]$. Define the sequence $\epsilon_n=\frac{4}{2^n}$. For each $n\in \mathbb{N}$, we construct the set \begin{align*}
 A_n=\{(a,b)| a,b\in \{-4,-4+\frac{1}{2^{n}},...,0,...,4-\frac{1}{2^{n}},4 \} \textnormal{ and }\\|(a-\frac{4}{2^i})^2+b^2-(\frac{4}{2^i})^2|<\frac{4}{2^n} \textnormal{ where } i\in \{0,1,...,n \}\}.
 \end{align*}
Note that in the construction of $A_n$, the point $(0,0)$ approximates every point in $$X\setminus \bigcup_{i=1}^n \{(x,y)\in \mathbb{R}^2|(x-\frac{4}{2^i})^2+y^2-(\frac{4}{2^i})^2=0 \}.$$ For this reason, the index $i$ in $A_n$ belongs to $\{0,1,...,n \}$. Figure \ref{fig_hawaiian} represents the Betti numbers of $\mathcal{U}_{4\epsilon_n}(A_n)$. It is evident that $X$ has only one connected component. On the other hand, as $n$ increases, $\beta_1$ also increases, which is expected given that $X$ is homeomorphic to the Hawaiian earring. 
\begin{figure}[h!]
\centering
\begin{tikzpicture}
    \begin{axis}[
        xlabel={Values of $j$},
        ylabel={Values of the Betti numbers},
        grid=major,
        xtick={1,2,...,10},
        ytick={0,1,2,...,7},
        legend pos=north west,
        legend style={font=\small}
    ]
    \addplot[color=blue,mark=*] coordinates {
        (1,1) (2,1) (3,1) (4,1) (5,1) (6,1) (7,1) (8,1) (9,1) (10,1)
    };
    \addlegendentry{$\beta_0^j$}
    \addplot[color=red,mark=square] coordinates {
        (1,0) (2,0) (3,0) (4,1) (5,2) (6,3) (7,4) (8,5) (9,6) (10,7)
    };
    \addlegendentry{$\beta_1^j$}
    \end{axis}
\end{tikzpicture}
\caption{Betti numbers $\beta_i^j$ for $i=0,1$ and $j=1,...,10$ from Example \ref{ex_hawaiian}}\label{fig_hawaiian}
\end{figure}
\end{ex}

\section{Proof of the Main result}\label{sec_proof}
\begin{proof}[Proof of Theorem \ref{thm:truncated_normal}]
Define $(I,I_n):(\mathcal{U}_{4\delta_n}(B_n),q_{n,n+1})\rightarrow (\mathcal{U}_{4\epsilon_n}(A_n),q_{n,n+1})$ as follows: $I(n):=\min \{ l\in \mathbb{N}| \delta_l<\frac{\epsilon_n}{16}\}$ and $I_n:\mathcal{U}_{4\delta_{I(n)}}(B_{I(n)})\rightarrow \mathcal{U}_{4\epsilon_n}(A_n)$ is given by $I_n(C):=\bigcup_{c\in C} (B(c,\epsilon_n)\cap A_n)$. Define $(T,T_n):\mathcal{U}_{4\epsilon_{T(n)}}(A_{T(n)}),q_{n,n+1})\rightarrow (\mathcal{U}_{4\delta_n}(B_n))$ as follows: $T(n):=\min \{ l\in \mathbb{N}| \epsilon_l<\frac{\delta_n}{16}\}$ and $T_n:\mathcal{U}_{4\epsilon_{T(n)}}(A_{T(n)})\rightarrow \mathcal{U}_{4\delta_n}(B_n)$ is defined by $T_n(C):=\bigcup_{c\in C} (B(c,\epsilon_{T(n)}+\delta_n)\cap A_n)$.  We need to prove that $(I,I_n)$ and $(T,T_n)$ are morphism in pro-$HTop$ and later that $(I,I_n)\circ (T,T_n)$ is homotopic to the identity morphism $(\textnormal{id},\textnormal{id}_n):(\mathcal{U}_{4\epsilon_n}(A_n),q_{n,n+1})\rightarrow (\mathcal{U}_{4\epsilon_n}(A_n),q_{n,n+1})$ and also that  $(T,T_n)\circ (I,I_n)$ is homotopic to the identity morphism $(\textnormal{id},\textnormal{id}_n):(\mathcal{U}_{4\delta_n}(B_n),q_{n,n+1})\rightarrow (\mathcal{U}_{4\delta_n}(B_n),q_{n,n+1})$. We break the proof in several steps:

\underline{$I_n$ is well defined and continuous.} Consider $C\in \mathcal{U}_{4\delta_{I(n)}}(B_{I(n)})$. Notice that $I_n(C)$ cannot be the empty set because for every $x\in X$ there exists $a\in A_n$ such that $d(x,a)<\epsilon_n$. Suppose $a_x,a_y\in I_n(C)$, which yields that there are points $x,y\in C$ satisfying $d(x,a_x),d(y,a_y)<\epsilon_n$. Then, $$d(a_x,a_y)\leq d(a_x,x)+d(x,y)+d(y,a_y)<\epsilon_n+\frac{\epsilon_n}{4}+\epsilon_n<4\epsilon_n,$$
and we conclude that $\textnormal{diam}(I_n(C))<4\epsilon_n$ and $I_n(C)\in \mathcal{U}_{4\epsilon_n}(A_n)$. By the definition of $I_n$, it is immediate that $I_n$ is continuous because it preserves the subset relation.

\underline{$(I,I_n)$ is a morphism in pro-$HTop$.} We must verify that $I_{n}\circ q_{I(n),I(m)}$ is homotopic to $q_{n,m} \circ I_m$ for every $m\geq n$. To prove this, we first define $H:\mathcal{U}_{4\delta_{I(m)}}(B_{I(m)})\rightarrow \mathcal{U}_{4\epsilon_n}(A_n)$ by $H(C):=q_{n,m}(I_m(C))\cup (I_{n}(q_{I(n),I(m)}(C)))$. We demonstrate that $H$ is well defined. Consider $C\in \mathcal{U}_{4\delta_{I(m)}}(B_{I(m)})$, and $a_x,a_y\in H(C)$ such that $a_x\in I_{n}(q_{I(n),I(m)}(C))$ and $a_y\in q_{n,m}(I_m(C))$. Then, there exist $y_a\in A_m$, $x_a\in B_{I(n)}$ and $c_x,c_y\in C$ such that $d(a_x,x_a),d(a_y,y_a)<\epsilon_n$, $d(x_a,c_x)<\delta_{I(n)}$ and $d(y_a,c_y)<\epsilon_m$. By combining these inequalities, we deduce
$$d(a_x,a_y)\leq  \epsilon_n+\delta_{I(n)}+4\delta_{I(m)}+\epsilon_m+\epsilon_n<4\epsilon_n,$$ which yields that $H$ is well defined. The continuity of $H$ follows from its construction. Clearly, we have $H\geq q_{n,m} \circ I_m,I_n \circ q_{I(n),I(m)}$. From this and Theorem \ref{thm_homotopia_finitos}, we deduce that $q_{n,m} \circ I_m$ and $I_n \circ q_{I(n),I(m)}$ are homotopic maps and we can conclude that $(I,I_n)$ is a well defined morphism in pro-$HTop$.

\underline{$T_n$ is well defined and continuous.} Consider $C\in \mathcal{U}_{4\epsilon_{T(n)}}(A_{T(n)})$. First, we prove that that $T_n(C)$ cannot be the empty set. Let $c\in C$. Then $B(c,\epsilon_{T(n)})\cap X\neq \emptyset$ by the hypothesis and we can consider $a\in B(c,\epsilon_{T(n)})\cap X$. Since $B_n$ is a $\delta_n$ approximation of $X$, there exists $b\in B_n$ such that $d(a,b)<\delta_n$. Particularly, this means that $T_n(c)=B(c,\epsilon_{T(n)}+\delta_n)\cap B_n$ is a non-empty set, which yields that $T_n(C)$ is a non-empty set. Now, suppose $b_x,b_y\in T_n(C)$, this gives that there are points $x,y\in C$ with $d(x,b_x),d(y,b_y)<\epsilon_{T(n)}+\delta_n$. Then, $$d(b_x,b_y)\leq d(b_x,x)+d(x,y)+d(y,b_y)<\epsilon_{T(n)}+\delta_n+\frac{\delta_n}{4}+\epsilon_{T(n)}+\delta_n<4\delta_n$$
and we conclude that $\textnormal{diam}(T_n(C))<4\delta_n$ and $T_n(C)\in \mathcal{U}_{4\delta_n}(B_n)$. By the definition of $T_n$, it is immediate that $T_n$ is continuous because it preserves the subset relation.

\underline{$(T,T_n)$ is a morphism in pro-$HTop$.} We prove that $T_{n}\circ q_{T(n),T(m)}$ is homotopic to $q_{n,m} \circ T_m$ for every $m\geq n$. To prove this, we first define $H:\mathcal{U}_{4\epsilon_{T(m)}}(A_{T(m)})\rightarrow \mathcal{U}_{4\delta_n}(B_n)$ by $H(C)=q_{n,m}(T_m(C))\cup (T_{n}(q_{I(n),I(m)}(C)))$. We demonstrate that $H$ is well defined. Consider $C\in \mathcal{U}_{4\epsilon_{T(m)}}(A_{T(m)})$, and $b_x,b_y\in H(C)$ such that $b_x\in q_{n,m}(T_m(C))$ and $b_y\in T_{n}(q_{T(n),T(m)}(C))$. Then, there exist $x_b\in B_m$, $y_a\in A_{T(n)}$ and $c_a,c_b\in C$ such that $d(b_x,x_b)<\delta_n$, $d(x_b,c_b)<\epsilon_{T(m)}+\delta_m$, $d(b_y,y_b)<\epsilon_{T(n)}+\delta_n$ and $d(y_b,c_b)<\epsilon_{T(n)}$. By combining these inequalities, we deduce
\begin{align*}
d(b_x,b_y)&\leq  \epsilon_{T(n)}+\delta_n+\epsilon_{T(n)}+4\epsilon_{T(m)}+\epsilon_{T(m)}+\delta_m+\delta_n\\ & <\frac{\delta_n}{16}+\delta_n+\frac{\delta_n}{16}+\frac{\delta_n}{4}+\frac{\delta_n}{16}+\delta_n+\delta_n<4\delta_n,
\end{align*}
which yields that $H$ is well defined. The continuity of $H$ follows trivially. Clearly, we have $H\geq q_{n,m} \circ T_m,q_{n,m} \circ T_m$. From this and Theorem \ref{thm_homotopia_finitos}, we deduce that $q_{n,m} \circ T_m,q_{n,m} \circ T_m$ are homotopic maps and we can conclude that $(T,T_n)$ is a well defined morphism in pro-$HTop$.

\underline{$(T,T_n)\circ (I, I_n)$ is homotopic to the identity morphism.} We verify that $q_{n,m}\circ \text{id}_m$ is homotopic to $T_n\circ I_{T(n)}\circ q_{I(T(n)),m}$ for every $m>I(T(n))$. We repeat previous arguments. Define $H:\mathcal{U}_{4\delta_{m}}(B_m)\rightarrow \mathcal{U}_{4\delta_{n}}(B_n)$ by $H(C)=q_{n,m}(C)\cup  T_n( I_{T(n)}(q_{I(T(n)),m}(C)))$. By showing that $H$ is well defined and continuous we can conclude the desired result due to Theorem \ref{thm_homotopia_finitos}. Consider $C\in \mathcal{U}_{4\delta_{m}}(B_m)$ and $x,y\in H(C)$ such that $y\in q_{n,m}(C)$ and $x\in T_n( I_{T(n)}(q_{I(T(n)),m}(C)))$. Then, there exist $a_x\in A_{T(n)}$, $b_x\in B_{I(T(n))}$ and $c_x,c_y\in C$ such that $d(y,c_y)<\delta_n$, $d(x,a_x)<\epsilon_{T(n)}+\delta_n$, $d(a_x,b_x)<\epsilon_{T(n)}$ and $d(b_y,c_y)<\delta_{I(T(n))}$. By combining these inequalities, we obtain
\begin{align*}
d(x,y)&\leq \epsilon_{T(n)}+\delta_n+\epsilon_{T(n)}+\delta_{I(T(n))}+4\delta_m+\delta_n \\
& <\frac{\delta_n}{16}+\delta_n+\frac{\delta_n}{16}+\frac{\delta_n}{256}+\frac{\delta_n}{128}+\delta_n<4\delta_n,
\end{align*}
where we are using the fact that $4\delta_m<2\delta_{I(T(n)}<\frac{\epsilon_{T(n)}}{8}<\frac{\delta_n}{128}$. 
Thus $H$ is well defined and the continuity of $H$ is trivial by its construction. By applying Theorem \ref{thm_homotopia_finitos}, we deduce that $q_{n,m}\circ \text{id}_n$ is homotopic to $T_n\circ I_{T(n)}\circ q_{I(T(n)),m}$.

\underline{$(I,I_n)\circ (T, T_n)$ is homotopic to the identity morphism.} This is equivalent to prove that $q_{n,m}\circ \text{id}_n$ is homotopic to $I_n\circ T_{I(n)}\circ q_{T(I(n)),m}$ for every $m>T(I(n))$. Define $H:\mathcal{U}_{4\epsilon_{m}}(A_m)\rightarrow \mathcal{U}_{4\epsilon_{n}}(A_n)$ by $H(C)=q_{n,m}(C)\cup  I_n( T_{I(n)}(q_{T(I(n)),m}(C)))$. Consider $C\in \mathcal{U}_{4\epsilon_{m}}(A_m)$ and $x,y\in H(C)$ with $x\in I_n( T_{I(n)}(q_{T(I(n)),m}(C)))$ and $y\in q_{n,m}(C)$. Then, there exist $b_x\in B_{I(n)}$, $a_x\in A_{T(I(n))}$ and $c_x,c_y\in C$ such that $d(x,b_x),d(y,c_y)<\epsilon_n$, $d(b_x,a_x)<\delta_{I(n)}+\epsilon_{T(I(n))} $ and $d(a_x,c_x)<\epsilon_{T(I(n))}$. Thus,
\begin{align*}
d(x,y)&\leq \epsilon_n+\delta_{I(n)}+\epsilon_{T(I(n))}+\epsilon_{T(I(n))}+4\epsilon_m+\epsilon_n\\
&<\epsilon_n+\frac{\epsilon_n}{16}+\frac{\epsilon_n}{256}+\frac{\epsilon_n}{256}+\frac{\epsilon_n}{128}+\epsilon_n<4\epsilon_n
\end{align*} 
because $4\epsilon_m<2\epsilon_{T(I(n))}<\frac{\delta_{I(n)}}{8}<\frac{\epsilon_n}{128}$. Hence, $H$ is well defined. By repeating previous arguments, we deduce that $(I,I_n)\circ (T, T_n)$ is homotopic $(\text{id},\text{id}_m)$.
\end{proof}

\bibliography{bibliografia}
\bibliographystyle{plain}

\newcommand{\Addresses}{{
  \bigskip
  \footnotesize

  \textsc{ P.J. Chocano, Departamento de Matemática Aplicada,
Ciencia e Ingeniería de los Materiales y
Tecnología Electrónica, ESCET
Universidad Rey Juan Carlos, 28933
Móstoles (Madrid), Spain}\par\nopagebreak
  \textit{E-mail address}:\texttt{ pedro.chocano@urjc.es}

}}

\Addresses

\end{document}